\let\csname equation*\endcsname\relax
\let\csname endequation*\endcsname\relax
\newcommand{\p}{\partial}
\newcommand{\veps}{\varepsilon}
\newcommand{\wt}{\widetilde}
\newcommand{\no}{\nonumber}
\newcommand{\vu}{\vec{u}}
\newtheorem{assumption}{Assumption}
\newtheorem{theorem}{Theorem}[section]
\newtheorem{lemma}[theorem]{Lemma}
\newtheorem{corollary}{Corollary}[section]
\newtheorem{remark}{Remark}[section]
\newcounter{TableRownNumber}
\newcounter{CIndex}
\newcommand{\nI}{\stepcounter{CIndex}\theCIndex}
\newcommand{\tI}{\theCIndex}
\numberwithin{equation}{section}
\newlength{\FigureHeight}
\newlength{\FigurePageWidth}
\begin{document}
\begin{CJK*}{GBK}{}
\title{Carleman estimate for telegrapher's equations on a network and application to an inverse problem}
\author{Yibin Ding
	\thanks{School of Mathematical Sciences, Zhejiang University, Hangzhou, 310027, China. Email: dybmath@yeah.net}
	\and
	Xiang Xu
	\thanks{School of Mathematical Sciences, Zhejiang University, Hangzhou, 310027, China. Email: xxu@zju.edu.cn}
}
  \date{} 
  \maketitle
\begin{abstract}
The main purpose of this work is to study an inverse coefficient problem for telegrapher's equations on a tree-shaped network. To analyze the stability for this inverse problem, Carleman estimate is established first. Based upon this estimate, a Lipschitz stability is further derived with additional data given on partial boundary. 
\end{abstract}

\section{Introduction}
Telegrapher's equations are a pair of coupled linear differential equations which describe the evolution of voltage and current on a transmission line. The equations were originally developed by Oliver Heaviside for centuries where he showed electromagnetic waves could be reflected on wires and wave patterns could appear along the transmission lines. In fact, the equations apply various cases ranging from high frequencies (like radio frequency conductors) to low frequencies (such as power lines) and zero frequency (direct current), see \cite{ME001} and references therein for more applications.   \par
To model transmission lines, it is assumed that lines are composed by an infinite series of an infinitesimal short segment with two-port elementary components which is shown in Figure \ref{fig.01}. \par
\begin{figure}[!ht]\centering
\includegraphics[width=0.7\textwidth]{./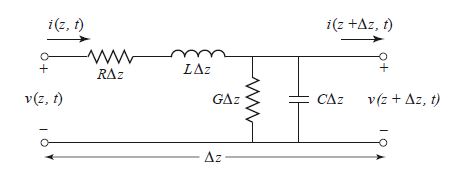}
\caption{Schematic diagram of the elementary components in a transmission line \cite{ME001}. }
\label{fig.01}
\end{figure}
According to Kirchhoff's voltage and current law, the governing equations which are so-called telegrapher's equations can be written as
\begin{gather*}
\begin{cases}
\p_z v(z,t)=-R i(z,t)-L\p_t i(z,t),\\
\p_z i(z,t)=-G v(z,t)-C\p_t v(z,t),
\end{cases}
\end{gather*}
where \(L\), \(C\), \(R\) and \(G\) are distributed inductance (H/m), capacitance (F/m), resistance (\(\Omega\)/m) and conductance (S/m), respectively. Moreover, \(i\) represents current and \(v\) represents voltage. In literature, due to its huge applications in electrical engineering, telegrapher's equations have been intensively investigated. For instance, Sano \cite{Sano001} and Gugat \cite{Gugat001} analyse the stability of this system with different boundary conditions. Allali et al \cite{AllaliAlaaGhammazRouijaa001} develop some new scheme to solve the equations. In addition, Kac \cite{Kac001} discuss a stochastic model of this system and Weiss \cite{Weiss001} show some applications of random walks described by telegrapher's equations. However, most related works on telegrapher's equations focus on forward problems. Albeit important, inverse problems on such kind of equations are still at its infant stage, to the best knowledge of authors. \par
In this paper, we will discuss an inverse coefficient problem for telegrapher's equations on a tree-shaped network. Generally speaking, it does make sense to consider the problem on a tree-shaped network without loops since the signal wire and return wire in transmission lines are typically assumed to be separated. Mathematically speaking, when considering the problem on a tree-shaped network, more or less, we actually investigate a system of telegrapher's equations with transmission boundary conditions imposed on inner vertices. Hence, the strategy is similar as the case of a straight line in which the Carleman estimate plays a significant important role, such as Bellassoued et al \cite{BY001,BY002}, Bukhgeim et al \cite{BK001}, Imanuvilov et al \cite{Imanuvilov001,IY001,IY002,IY003,IY004}, Klibanov et al \cite{Klibanov001,Klibanov2013Carleman}, Klibanov and Timonov \cite{KT001}, and Yamamoto et al \cite{Y001}. Besides, there have been many works on inverse problems for partial differential equations on networks as well. For instance, Baudouin et al \cite{BCV001} obtain a global Carleman estimate on a star-shaped network for the wave equation and apply it to an inverse problem. Ignat et al \cite{IPR001} consider heat equation on a general tree and Schr\"{o}dinger equation on a star-shaped tree, respectively. Baudouin and Yammamoto \cite{BY001} give some uniqueness results for inverse problems on wave, heat and Schr\"{o}dinger equations as well. Three different inverse problems for Schr\"{o}dinger equations are discussed in Avdonin et al \cite{AK001}. Mercier and R\'egnier study the boundary controllability of a network of Euler-Bernoulli beams \cite{MR001}. \par
The rest of this paper is organized as follows. Section \ref{Section.ProblemAndEstimates} is devoted to show the problem and two main results, i.e., Carleman estimate of telegrapher's equations on a tree-shaped network and Lipschitz stability of an inverse coefficient problem. Detailed proofs of the Carleman estimates and stability will be given respectively in the final two sections.
\section{The formulation of governing equations and inverse coefficient problem}\label{Section.ProblemAndEstimates}
Before proceeding to governing equations, we will briefly introduce some notations of a tree-shaped network.
Let \(\Lambda\) be a tree-shaped network consisting of \(N\) edges \((E_j)_{j=1,\dots,N}\) and \(N+1\) vertexes \((V_k)_{k=0,1,\dots,N}\). The length of edge \(E_j\) is \(l_j\). Without losing any generality, we select a vertex that connects only one edge as the root point \(V_0\). Since the tree-shaped network has no loop, there is only one way from \(V_0\) to all points in the network. Thus, the coordinate \(x(A)\) of a point \(A\) in the net can be set as the length of the way from \(V_0\) to \(A\) without any contradiction. Each edge \(E_j\) has two endpoints, which will be set as the initial node \(I_j\) and the terminal node \(T_j\) respectively. And we set \(x(I_j)<x(T_j)\). Thus, we have \(x(I_j)+l_j=x(T_j)\).
For simplicity, we define a characteristic function $\chi_{V_k}$ as follows:
\begin{gather*}
\chi_{V_k}(P)=
\begin{cases}
1,\ P\text{ is }V_k,\\
0,\ P\text{ is not } V_k,
\end{cases}
\end{gather*}
where \(P\) is any point in the net. Set
\begin{align*}
S_{I,k}=&\{j:\chi_{V_k}(I_j)=1,j=1,\dots,N\},\\
S_{T,k}=&\{j:\chi_{V_k}(T_j)=1,j=1,\dots,N\}.
\end{align*}
Thus, \(S_{I,k}\) is the set of index numbers for edges whose initial node is \(V_k\) and \(S_{T,k}\) is the set of index numbers for edges whose terminal node is \(V_k\). Moreover, for simplicity, all vertexes are classified into two separate sets:
\begin{gather*}
\begin{cases}
\displaystyle
\Pi_1=\{k:\sum_{j=1}^N \chi_{V_k}(I_j)+\chi_{V_k}(T_j)=1\},\\
\displaystyle
\Pi_2=\{k:\sum_{j=1}^N \chi_{V_k}(I_j)+\chi_{V_k}(T_j)>1\}.
\end{cases}
\end{gather*}
Here, \(\Pi_1\) is the set of index numbers for boundary vertexes and \(\Pi_2\) is the set of index numbers for all inner vertexes.
Since $\Lambda$ is a tree-shaped network, we have some obvious results as follows:
\begin{gather}
\label{PointSet.Condition}
\begin{cases}
S_{T,0}=\varnothing,\\
 S_{I,k}=\varnothing,\ k\in\Pi_1\backslash\{0\},\\
 |S_{T,k}|=1,\ k=1,2,\dots,N,\\
 |S_{I,k}|\geq 1,\ k\in \Pi_2\cup\{0\}.
 \end{cases}
 \end{gather}
  For instance, $S_{T,0}=\varnothing$ means that no edge ends with root point. $|S_{T,k}|=1$ means that any point is at most the terminal node of one edge.
 \begin{remark}\label{Remark.Network}
 For easiness to readers, we give a simple example of tree-shaped network in Figure \ref{fig.02} which will be utilized in the numerical example in Section \ref{Section.NumericalExperiments} as well. Obviously, according to the definitions of notations, we have \(I_1=V_0,\ T_1=V_1\), \(I_2=V_1,\ T_2=V_2\), \(I_3=V_1,\ T_3=V_3\), \(I_4=V_3,\ T_4=V_4\), \(I_5=V_3,\ T_5=V_5\). Thus, \(\Pi_1=\{0,2,4,5\},\ \Pi_2=\{1,3\}\), \(S_{T,0}=S_{I,2}=S_{I,4}=S_{I,5}=\varnothing\), \(S_{I,0}=\{1\}\), \(S_{I,1}=\{2,3\}\), \(S_{I,3}=\{4,5\}\), \(S_{T,1}=\{1\}\), \(S_{T,2}=\{2\}\), \(S_{T,3}=\{3\}\), \(S_{T,4}=\{4\}\), \(S_{T,5}=\{5\}\). Furthermore, we have \(x(V_0)=0\), \(x(V_1)=l_1\), \(x(V_2)=l_1+l_2\), \(x(V_3)=l_1+l_3\), \(x(V_4)=l_1+l_3+l_4\) and \(x(V_5)=l_1+l_3+l_5\), if \(l_i,\ i=1,2,3,4,5\) are the length of edge \(E_i\) respectively.
 \begin{figure}[!ht]\centering
\includegraphics[width=0.6\textwidth]{./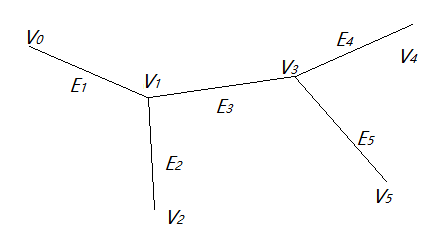}
\caption{An example of tree-shaped network with five edges.}
\label{fig.02}
\end{figure}
 \end{remark}
For convenience, we define an operator \(L(\vec{a}) \):
\begin{gather*}
L(\vec{a})\vec{v}=
\begin{pmatrix}
a_{1} \p_t + a_3&\p_x \\
\p_x& a_2\p_t + a_4
\end{pmatrix}\vec{v},
\end{gather*}
where  \(\vec{a}=(a_1,a_2,a_3,a_4)^T\), \(\vec{v}=(v_1,v_2)^T\). Each components of \(\vec{a}\) represents the distributed inductance, capacitance, resistance, and conductance, respectively. \(v_1\) represents current and \(v_2\) represents voltage.\par
Now, we can give the governing equations with initial and boundary conditions on a network as follows:
\begin{gather}
\label{Electrical.Networks}
\begin{cases}
L(\vec{p}_j)\vu_j=0,\ (x,t)\in Q_j,\ j=1,\dots,N,\\
\vec{u}_j(x,0)=\vec{z}_j(x),\ x\in\mathcal{I}_j,\ j=1,\dots,N,\\
u_{j,2}(x(V_k),t)=\phi_{k}(t),\ k\in\Pi_1,\ j\in S_{I,k}\cup S_{T,k},\ t\in(-T,T),
\end{cases}
\end{gather}
where \(\mathcal{I}_j=(x(I_j),x(T_j))\) and \(Q_j=\mathcal{I}_j\times(-T,T)\). {Moreover, let \(\|p_{j,i}\|_{L^\infty(\mathcal{I})}\leq M_{i},\ j=1,2,\dots,N,\ i=1,2,3,4\) and assume they are consistent on the vertexes:}
\begin{gather}
\label{Coefficient.Assumption}
p_{m,i}=p_{n,i},\ m,n\in S_{T,k}\cup S_{I,k},
k\in\Pi_2,\ i=1,2.
\end{gather}
{ For convenience,denote \(\vec{M}=(M_1,M_2,M_3,M_4)^T\). According to the physical meaning of coefficients, \(p_{j,i}\geq \underline{M}_{i}>0, \ j=1,2,\dots,N,\ i=1,2\) and \(p_{j,i}\geq 0, \ j=1,2,\dots,N,\ i=3,4\). Let \(\{\vec{p}_j\}_{j}\in \mathcal{U}(\vec{M},\underline{M}_1,\underline{M}_2)\), where \(\mathcal{U}(\vec{M},\underline{M}_1,\underline{M}_2)\) is the set of coefficients, whose elements satisfy all conditions above.
}
From Kirchhoff law of current and voltage, we have the following transmission boundary conditions on inner vertexes:
\begin{gather}
\label{KirchhoffLaw}
\begin{cases}
\displaystyle
\sum_{j\in S_{I,k}} u_{j,1}(x(I_j),t)=\sum_{j\in S_{T,k}} u_{j,1}(x(T_j),t),\ k\in\Pi_2,
\\
u_{m,2}(x(V_k),t)=u_{n,2}(x(V_k),t)=:u_2(V_k,t),\ \text{if}\ V_k\in E_m\cap E_n\cap\Pi_2,
\end{cases}t\in[-T,T].
\end{gather}
Moreover, from \eqref{PointSet.Condition}, we know that
\begin{gather*}
|S_{I,k}|\geq |S_{T,k}|=1,\ k\in\Pi_2.
\end{gather*}
Applying Cauchy-Schwartz inequality to \eqref{KirchhoffLaw}, we have
\begin{gather}
\label{Vertex.Inequality}
\sum_{j\in S_{T,k}} {u_{j,1}^2(x(T_j),t)}\leq|S_{I,k}| \sum_{j\in S_{I,k}} {u_{j,1}^2(x(I_j),t)},
k\in\Pi_2.
\end{gather}

{As mentioned in the introduction, the well-posedness of initial boundary value problem of the above system has been obtained by \cite{Sano001,Gugat001}.} However, in this paper, we will mainly investigate the following inverse problem.\\
{\bf Inverse Problem.} Suppose the initial and boundary conditions (\(\vec{z}_j,\  j=1,\dots,N\) and \(\phi_k,\ k\in\Pi_1\) ) are given. Can we stably reconstruct the coefficients \(\vec{p}_j,\ j=1,\dots,N\) in the system with additional boundary observation data, i.e., current on boundary vertexes, $u_{j,1}(x(V_k),t),\ j\in S_{I,k}\cup S_{T,k},\ t\in(-T,T)?$ A partially positive answer will be given in this paper under some assumptions.

Before proceeding to investigate the inverse problem, we need Carleman estimate of the follow system with homogenous initial and boundary conditions:
\begin{gather}
\label{Electrical.Networks.Base}
\begin{cases}
L(\vec{p}_j)\vu_j=\vec{f}_j,\ (x,t)\in Q_j,\ j=1,\dots,N,\\
\vec{u}_j(x,t)=0,\ (x,t)\in\mathcal{I}_j\times\{-T,T\},\ j=1,\dots,N,\\
u_{j,2}(x(V_k),t)=0,\ k\in\Pi_1,\ j\in S_{I,k}\cup S_{T,k},\ t\in(-T,T),
\end{cases}
\end{gather}
which shares the same conditions \eqref{Coefficient.Assumption} and \eqref{KirchhoffLaw} on the inner vertexes as \eqref{Electrical.Networks}.
Set the weight function in the Carleman estimates as
\begin{gather*}
\varphi_j=\alpha_j(x-x_j^*)^2-\beta t^2,\ (x,t)\in Q_j,\ j=1,2,\dots,N.
\end{gather*}
Here, \(x_j^*<x(I_j)\). We can regard \(x_j^*\) as a point out of the edge \(E_j\). Moreover, we can assume
\begin{gather}
\label{weightfunction.assumption}
\begin{cases}
\varphi_i(x(V_k),t)=\varphi_j(x(V_k),t),\ i,j\in S_{I,k}\cup S_{T,k},\\
|S_{I,k}|\p_x \varphi_i(x(V_k),t)=\p_x\varphi_j(x(V_k),t),\ i\in S_{T,k},\ j\in S_{I,k}.
\end{cases}
\end{gather}
It is reasonable since we can easily take $x_j^*=\dfrac{(|S_{I,k}|-1)x(V_k)+x_i^*}{|S_{I,k}|}$ and $\alpha_j=|S_{I,k}|^2\alpha_i$ for $\ i\in S_{T,k},\ j\in S_{I,k}$.
Furthermore, we assume the weight function $\varphi_j$ satisfies the following assumption. It is necessary to guarantee the leading order term to be positive in Carleman estimate in \ref{I_j4}
\begin{assumption}\label{assumption.1}
\begin{gather*}
p_{j,1} p_{j,2}|\p_t\varphi_j|^2-|\p_x\varphi_j|^2\neq0,\ (x,t)\in Q_j,\ j=1,\dots,N.
\end{gather*}
\end{assumption}
Then, we can present the first theorem of this paper, i.e., a Carleman estimate to \eqref{Electrical.Networks.Base}.
\begin{theorem}\label{Theorem.Carleman}
{If \(\vec{u}_j\in L^2(0,T;H^2(\mathcal{I}_j)),\ j=1,2,\dots,N\) is the solution of \eqref{Electrical.Networks.Base},
then there exists \(\tau_0>0\) such that for any \(\tau>\tau_0\), there exists \(s_0=s_0(\tau)>0\), such that
\begin{gather*}
\sum_{j=1}^N s^2\int_{Q_j} |\vec{u}_j|^2 e^{2s\varphi}dxdt \leq
C_{\nI}\sum_{j=1}^N \int_{Q_j} |\vec{f}_j|^2 e^{2s\varphi_j}dxdt+C_{\nI}\wt{B},
\end{gather*}
where
\begin{gather*}
\wt{B}=\sum_{k\in\Pi_1\backslash\{0\}}\sum_{j\in S_{T,k}} \int_{-T}^T s u_{j,1}^2 e^{2s\varphi} dt\Big|_{x=x(T_j)}
\end{gather*}
for all \(s> s_0\). Here \(C\) is a constant, which is dependent on \(\tau_0\), \(s_0\), \(\{\mathcal{I}_j\}\), \(T\), \(\vec{M}\), \(\underline{M}_{1}\), \(\underline{M}_{2}\) and independent of \(\{\vec{p}_j\}\), \(s\).
}
\end{theorem}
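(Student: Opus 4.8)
\emph{Strategy and Step 1 (conjugation on one edge).} The plan is to run the usual conjugation/Carleman‑weight argument on each edge, keeping \emph{all} boundary terms, and then to sum over the edges so that every term produced by an inner vertex cancels — thanks to the transmission conditions \eqref{KirchhoffLaw}, the coefficient compatibility \eqref{Coefficient.Assumption}, and the weight matching \eqref{weightfunction.assumption} — while the boundary‑vertex terms survive and, after a trivial estimate, produce $\wt B$. I do not expect a stand‑alone estimate on a single edge, since at an endpoint which happens to be an inner vertex the boundary contribution need not have a favourable sign. Concretely, on $Q_j$ set $\vec w_j=e^{s\varphi_j}\vu_j$ and form the conjugated operator
\[
L_s(\vec p_j)\vec w_j:=e^{s\varphi_j}L(\vec p_j)\big(e^{-s\varphi_j}\vec w_j\big)=(D_j+R_j)\vec w_j-s\,\Phi_j\vec w_j=e^{s\varphi_j}\vec f_j ,
\]
where $D_j=\begin{pmatrix}p_{j,1}\p_t & \p_x\\ \p_x & p_{j,2}\p_t\end{pmatrix}$ is skew‑adjoint on $L^2(Q_j)$ modulo boundary terms (the coefficients depend on $x$ only), $R_j=\mathrm{diag}(p_{j,3},p_{j,4})$ is a bounded multiplication, and $\Phi_j=\begin{pmatrix}p_{j,1}\p_t\varphi_j & \p_x\varphi_j\\ \p_x\varphi_j & p_{j,2}\p_t\varphi_j\end{pmatrix}$ is symmetric with $\det\Phi_j=p_{j,1}p_{j,2}|\p_t\varphi_j|^2-|\p_x\varphi_j|^2$, which is exactly the quantity Assumption \ref{assumption.1} keeps away from zero.

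\emph{Step 2 (single‑edge identity).} I expand $\|L_s(\vec p_j)\vec w_j\|_{L^2(Q_j)}^2$, discard $\|(D_j+R_j)\vec w_j\|^2\ge 0$, and integrate the cross term $-2s\langle(D_j+R_j)\vec w_j,\Phi_j\vec w_j\rangle$ by parts. The $t=\pm T$ boundary terms vanish by the homogeneous conditions in \eqref{Electrical.Networks.Base}, the $R_j$‑part produces no boundary term, and since $D_j$ is first order the surviving $x$‑boundary term carries no derivative of $\vec w_j$:
\[
\mathcal B_j=-2s\Big[\p_x\varphi_j\,(w_{j,1}^2+w_{j,2}^2)+(p_{j,1}+p_{j,2})\,\p_t\varphi_j\,w_{j,1}w_{j,2}\Big]_{x=x(I_j)}^{x=x(T_j)} .
\]
By Assumption \ref{assumption.1}, $\Phi_j$ is uniformly invertible on $\overline{Q_j}$ over the class $\mathcal U(\vec M,\underline M_1,\underline M_2)$, so $s^2\|\Phi_j\vec w_j\|^2\ge c\,s^2\int_{Q_j}|\vu_j|^2e^{2s\varphi_j}$, whereas every other interior term produced by the integrations by parts (the commutator‑type terms and the $R_j$‑term) is $O\!\big(s\int_{Q_j}|\vu_j|^2e^{2s\varphi_j}\big)$ with a constant depending only on $\vec M,\underline M_1,\underline M_2,T,\mathcal I_j$ and the fixed weight. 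Taking the parameter large — in the statement, first $\tau$ above a threshold $\tau_0$ (where Assumption \ref{assumption.1} and the geometry of $\varphi_j$ make the principal term genuinely dominant), then $s>s_0(\tau)$ to absorb the lower‑order terms — gives, for every $j$,
\[
\tfrac c2\,s^2\int_{Q_j}|\vu_j|^2e^{2s\varphi_j}\,dxdt\le\int_{Q_j}|\vec f_j|^2e^{2s\varphi_j}\,dxdt-\mathcal B_j ,
\]
the regularity $\vu_j\in L^2(0,T;H^2(\mathcal I_j))$ being what legitimises the integrations by parts (with a density argument if needed).

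\emph{Step 3 (summation and cancellation at the vertices).} I sum over $j$ and regroup $\sum_j\mathcal B_j$ by the vertex to which each endpoint belongs. At the root $V_0=I_{j_0}$ one has $w_{j_0,2}(V_0,\cdot)=0$ and, since $x_{j_0}^*<x(I_{j_0})$, $\p_x\varphi_{j_0}(V_0,\cdot)>0$; this endpoint contributes $+2s\,\p_x\varphi_{j_0}\,w_{j_0,1}^2\ge 0$ to $\sum_j\mathcal B_j$, hence a $\le 0$ term to $-\sum_j\mathcal B_j$, and is dropped. At a boundary vertex $V_k$, $k\in\Pi_1\setminus\{0\}$, $S_{T,k}=\{j\}$, again $w_{j,2}(V_k,\cdot)=0$, and the endpoint contributes $-2s\,\p_x\varphi_j\,w_{j,1}^2$, whose modulus is $\le C\,s\,u_{j,1}^2e^{2s\varphi_j}\big|_{x=x(T_j)}$; integrating in $t$ and summing over such $k$ produces precisely $C\wt B$. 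At an inner vertex $V_k$, $k\in\Pi_2$, with unique $j_k^-\in S_{T,k}$: using that $\varphi_j$ and $\p_t\varphi_j$ coincide at $V_k$ while $\p_x\varphi_j(V_k)=|S_{I,k}|\,\p_x\varphi_{j_k^-}(V_k)$ for $j\in S_{I,k}$ (from \eqref{weightfunction.assumption}), that $u_{j_k^-,1}(x(T_{j_k^-}),t)=\sum_{j\in S_{I,k}}u_{j,1}(x(I_j),t)$ and $u_{j,2}(V_k,t)=:u_2(V_k,t)$ is common (from \eqref{KirchhoffLaw}), and that $p_{j,1},p_{j,2}$ are common at $V_k$ (from \eqref{Coefficient.Assumption}), the $w_{j,1}w_{j,2}$ pieces cancel identically and the contribution of $V_k$ to $\sum_j\mathcal B_j$ collapses to
\[
2s\,\p_x\varphi_{j_k^-}(V_k)\,e^{2s\varphi(V_k)}\Big[\,|S_{I,k}|\!\!\sum_{j\in S_{I,k}}\!\!u_{j,1}^2(x(I_j),t)-\Big(\sum_{j\in S_{I,k}}\!\!u_{j,1}(x(I_j),t)\Big)^{\!2}+(|S_{I,k}|^2-1)\,u_2^2(V_k,t)\,\Big],
\]
which is $\ge 0$ because $\p_x\varphi_{j_k^-}(V_k)>0$, $|S_{I,k}|\ge 1$, and by the Cauchy--Schwarz inequality \eqref{Vertex.Inequality}; it thus contributes a $\le 0$ term to $-\sum_j\mathcal B_j$ and is dropped. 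Adding the per‑edge inequalities of Step 2 then leaves only $C\wt B$ on the right, which is the assertion.

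\emph{Main obstacle.} The delicate point is Step 3 at the branching vertices: the quadratic form collected there closes with a sign only because the weights are matched with the \emph{exact} factor $|S_{I,k}|$ prescribed in \eqref{weightfunction.assumption} — this is precisely the multiplier dual to the Cauchy--Schwarz loss \eqref{Vertex.Inequality}, and with any other normalisation the form at a branching vertex would be indefinite and the whole gluing would break. A secondary technical point is to check that the lower‑order interior terms in Step 2 are absorbable \emph{uniformly} over $\mathcal U(\vec M,\underline M_1,\underline M_2)$, so that the final constant is independent of $\{\vec p_j\}$ and of $s$.
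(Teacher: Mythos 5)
Your overall architecture is the paper's: conjugate with $e^{s\varphi_j}$, expand a quadratic form in the conjugated operator, discard the square of the derivative part, integrate the cross term by parts, bound the $s^2$-term below via Assumption \ref{assumption.1}, then sum over edges and kill the inner-vertex boundary contributions using \eqref{Coefficient.Assumption}, \eqref{KirchhoffLaw}, \eqref{weightfunction.assumption} and the Cauchy--Schwarz inequality \eqref{Vertex.Inequality}. Your Step 3 (the vertex bookkeeping, including the sign at the root, the survival of $\wt B$ at the vertices of $\Pi_1\setminus\{0\}$, and the nonnegativity of the collected quadratic form at branching vertices) is essentially identical to the paper's treatment of $D_0$, $D_k$ for $k\in\Pi_1\setminus\{0\}$ and $k\in\Pi_2$, and is correct.

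There is, however, a genuine gap in Step 2: you expand the \emph{unweighted} norm $\|L_s(\vec p_j)\vec w_j\|_{L^2(Q_j)}^2$ and claim that, after integration by parts, the cross term $-2s\langle D_j\vec w_j,\Phi_j\vec w_j\rangle$ leaves only interior terms of size $O\!\big(s\int|\vec w_j|^2\big)$. Writing $D_j=A\p_t+B\p_x$ with $A=\mathrm{diag}(p_{j,1},p_{j,2})$ and $B=\begin{pmatrix}0&1\\1&0\end{pmatrix}$, one has
\begin{equation*}
2\langle D_j\vec w,\Phi_j\vec w\rangle=-\langle \vec w,[D_j,\Phi_j]\vec w\rangle+\text{bdry},\qquad
[D_j,\Phi_j]=(A\p_t\Phi_j+B\p_x\Phi_j)+[A,\Phi_j]\p_t+[B,\Phi_j]\p_x,
\end{equation*}
and $[A,\Phi_j]$, $[B,\Phi_j]$ are \emph{not} zero unless $p_{j,1}=p_{j,2}$: explicitly the cross terms $p_{j,1}\p_x\varphi_j\,w_2\p_t w_1+p_{j,2}\p_x\varphi_j\,w_1\p_t w_2$ (and their $x$-counterparts) fail to assemble into $\p_t(w_1w_2)$, leaving an interior remainder of the form $s\,(p_{j,1}-p_{j,2})\big[w_2Xw_1-w_1Xw_2\big]$ with $X=\p_x\varphi_j\,\p_t-\p_t\varphi_j\,\p_x$. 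This is a first-order term in $\vec w$ of size $s\|\vec w\|\|\p\vec w\|$; it cannot be absorbed by $s^2\|\vec w\|^2$, and the discarded term $\|D_j\vec w\|^2$ only controls the two combinations $p_{j,1}\p_tw_1+\p_xw_2$ and $\p_xw_1+p_{j,2}\p_tw_2$, not $X w_i$ separately. So your single-edge inequality does not follow as stated for general inductance/capacitance profiles. The paper avoids this precisely by testing against $\mathrm{diag}(p_{j,2},p_{j,1})\wt L\vec w_j$ rather than $\wt L\vec w_j$: with that weight the cross terms become $p_{j,1}p_{j,2}\p_x\varphi_j\,\p_t(w_1w_2)$ and $p_{j,1}p_{j,2}\p_t\varphi_j\,\p_x(w_1w_2)$, i.e.\ exact derivatives, and only zeroth-order interior terms plus the boundary term $\p_x\big[s\p_x\varphi_j(p_{j,1}w_1^2+p_{j,2}w_2^2)+2sp_{j,1}p_{j,2}\p_t\varphi_j w_1w_2\big]$ survive. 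Once you insert this weight, your Step 3 goes through verbatim (the weights $p_{j,1},p_{j,2},p_{j,1}p_{j,2}$ appearing in the boundary term are common to all edges meeting at an inner vertex by \eqref{Coefficient.Assumption}), and the proof closes as in the paper.
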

The detailed proof of this theorem will be given in next section. Based on this estimate, we could give the second theorem on stability analysis of the inverse problem.  Suppose we have two systems with two independent set of measurement data, \(m=1,2\):
\begin{gather}
\label{ip.sys.01}
\begin{cases}
L(\vec{p}_j)\vec{u}_{m,j}=0,\ (x,t)\in Q_j,\\
\vec{u}_{m,j}(x,0)=\vec{z}_{m,j}(x),\ x\in(x(I_j),x(T_j)),j=1,\dots,N,\\
\vec{u}_{m,j}(x(V_k),t)=\phi_{m,j}(t),\ j\in S_{I,k}\cup S_{T,k},\ k\in \Pi_1.
\end{cases}
\end{gather}
\begin{gather}
\label{ip.sys.02}
\begin{cases}
L(\vec{q}_{j})\vec{v}_{m,j}=0,\ (x,t)\in Q_j,\\
\vec{v}_{m,j}(x,0)=\vec{z}_{m,j}(x),\ x\in(x(I_j),x(T_j)),j=1,\dots,N,\\
\vec{v}_{m,j}(x(V_k),t)=\phi_{m,j}(t),\ j\in S_{I,k}\cup S_{T,k},\ k\in \Pi_1.
\end{cases}
\end{gather}

Here, \(\vec{p}\) and \(\vec{q}\) are two set of independent coefficients. Denote \(\vec{\rho}_{j}=\vec{q}_{j}-\vec{p}_{j}\).
Moreover, we assume that
\begin{assumption}\label{Inverse.Assumption}
We select \(z_{1,j,1},z_{1,j,2},z_{2,j,1},z_{2,j,2}\) to satisfy that
\begin{gather*}
\det
\begin{pmatrix}
\p_x z_{1,j,2}&0& z_{1,j,1} &0\\
0&\p_x z_{1,j,1}&0&z_{1,j,2}\\
\p_x z_{2,j,2}&0& z_{2,j,1} &0\\
0&\p_x z_{2,j,1}&0&z_{2,j,2}
\end{pmatrix}\neq 0.
\end{gather*}
\end{assumption}
Then we can get the Lipschitz stability of this inverse problem:
\begin{theorem}\label{IP.Theorem}
Under the Assumption \ref{Inverse.Assumption}, and assume that \(\vec{w}_{m,j}\in (H^3(Q_j))^2\), then there exists a constant \(C\), which is dependent on \(\{\mathcal{I}_j\}\), \(T\), \(\vec{M}\), \(\underline{M}_{1}\), \(\underline{M}_{2}\) and independent of \(\{\vec{p}_j\}\), \(s\), such that
\begin{gather*}
\sum_{j=1}^{N}\sum_{n=1}^4 \int_{\mathcal{I}_j}|\rho_{j,n}|^2dx
\leq
C
\sum_{l,m=1}^2\sum_{k\in\Pi_1\backslash\{0\}}\sum_{j\in S_{T,k}} \int_{-T}^T  |\p_t^l w_{m,j,1}|^2  dt\Big|_{x=x(T_j)}.
\end{gather*}

\end{theorem}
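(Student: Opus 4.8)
The plan is to reduce the inverse coefficient problem to an application of the Carleman estimate in Theorem \ref{Theorem.Carleman} via the standard Bukhgeim--Klibanov method. First I would set $\vec{w}_{m,j}=\vec{v}_{m,j}-\vec{u}_{m,j}$ and derive the equation it satisfies: subtracting \eqref{ip.sys.01} from \eqref{ip.sys.02} gives $L(\vec{p}_j)\vec{w}_{m,j}=(L(\vec{p}_j)-L(\vec{q}_j))\vec{v}_{m,j}$, whose right-hand side is linear in the coefficient difference $\vec{\rho}_j$ with coefficients built from the time derivatives of $\vec{v}_{m,j}$; explicitly its components involve $\rho_{j,1}\p_t v_{m,j,1}+\rho_{j,3}v_{m,j,1}$ and $\rho_{j,2}\p_t v_{m,j,2}+\rho_{j,4}v_{m,j,2}$. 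The initial conditions coincide, so $\vec{w}_{m,j}(x,0)=0$. Because the source depends on $t$ only through $\vec{v}_{m,j}$ (and $\vec\rho_j$ is $t$-independent), I would differentiate the $\vec w$-equation in $t$ — here is where the $H^3$ regularity hypothesis on $\vec w_{m,j}$ enters — and introduce $\vec y_{m,j}=\p_t\vec w_{m,j}$, which still solves a telegrapher-type system with a source of the same structure plus lower-order terms in $\vec w_{m,j}$ and $\p_t\vec w_{m,j}$; crucially, evaluating the original equation at $t=0$ and using $\vec w_{m,j}(x,0)=0$ lets me express $\p_t\vec w_{m,j}(x,0)$ algebraically in terms of $\vec\rho_j$ and the initial data $\vec z_{m,j}$.

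Next I would symmetrize the time interval and arrange homogeneous lateral boundary conditions so that Theorem \ref{Theorem.Carleman} applies: since $\vec v_{m,j}$ and $\vec u_{m,j}$ share the same boundary data $\phi_{m,j}$, the second component $w_{m,j,2}$ vanishes on all boundary vertices in $\Pi_1$, and the Kirchhoff transmission conditions \eqref{KirchhoffLaw} are preserved by differences. To handle the endpoint values at $t=\pm T$ I would use a cutoff argument: multiply $\vec y_{m,j}$ by a function $\theta(t)$ that is $1$ near $t=0$ and vanishes near $t=\pm T$, apply the Carleman estimate to $\theta\vec y_{m,j}$, and absorb the commutator terms $[\,L,\theta\,]$, which are supported away from $t=0$ where the weight $e^{2s\varphi_j}$ is exponentially small, into the left side for $s$ large (this is the usual mechanism for converting a Carleman estimate into a stability bound). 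The boundary term $\wt B$ in Theorem \ref{Theorem.Carleman} becomes exactly $\sum_{k\in\Pi_1\setminus\{0\}}\sum_{j\in S_{T,k}}\int s\,y_{m,j,1}^2 e^{2s\varphi}\,dt|_{x=x(T_j)}$, i.e. an integral of $|\p_t w_{m,j,1}|^2$ on the measured part of the boundary, matching the right-hand side of the theorem (after also differentiating once more or integrating by parts in $t$ to account for the two time derivatives $\p_t^l$, $l=1,2$ appearing there).

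The remaining, and I expect the central, obstacle is the \emph{bootstrapping step} that passes from an estimate on $\vec y_{m,j}=\p_t\vec w_{m,j}$ back to a pointwise-in-$x$ bound on $\vec\rho_j$. The idea is to use the identity for $\p_t\vec w_{m,j}(x,0)$ together with Assumption \ref{Inverse.Assumption}: writing out the four scalar relations obtained from the two data sets $m=1,2$ and both components, one gets a linear system whose coefficient matrix is precisely the $4\times4$ matrix in Assumption \ref{Inverse.Assumption}; its nonvanishing determinant lets me solve for $(\rho_{j,1},\rho_{j,2},\rho_{j,3},\rho_{j,4})$ in terms of $\{\p_t w_{m,j,n}(x,0)\}$, hence $\sum_n|\rho_{j,n}(x)|^2\le C\sum_{m,n}|\p_t w_{m,j,n}(x,0)|^2$ uniformly in $x$. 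I would then bound $\int_{\mathcal I_j}|\p_t w_{m,j,n}(x,0)|^2 e^{2s\varphi_j(x,0)}\,dx$ by the energy of $\vec y_{m,j}$ on $Q_j$ via the fundamental theorem of calculus in $t$ (writing $g(x,0)^2 = -2\int_0^t g\,\p_t g\, dt' + \dots$ or an Carleman-weighted version thereof), which is controlled by the left-hand side of the Carleman estimate applied to $\vec y_{m,j}$; choosing $\tau,s$ large absorbs all lower-order terms, and since $\varphi_j(x,0)=\alpha_j(x-x_j^*)^2$ is bounded above and below on $\mathcal I_j$ the exponential weights contribute only fixed constants. Collecting the estimates over $j$ and over $m=1,2$ yields the claimed Lipschitz bound; the delicate points to get right will be the precise commutator estimates in the cutoff argument and verifying that the lower-order source terms generated by time-differentiation are genuinely absorbable, i.e. that the Carleman weights are compatible across the transmission vertices as guaranteed by \eqref{weightfunction.assumption}.
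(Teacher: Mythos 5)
Your proposal is correct and follows essentially the same route as the paper: the Bukhgeim--Klibanov method applied to the difference system, with the source linear in $\vec\rho_j$, recovery of $\vec\rho_j$ from $\p_t\vec w_{m,j}(\cdot,0)$ via the $4\times4$ determinant condition of Assumption \ref{Inverse.Assumption}, a time cutoff so that Theorem \ref{Theorem.Carleman} applies to $\chi\p_t^l\vec w_{m,j}$ for $l=1,2$, the fundamental-theorem-of-calculus step linking the $t=0$ trace to the Carleman-weighted energy, and absorption of the $\rho$-dependent source for large $s$ using $\varphi_j(x,t)\le\varphi_j(x,0)$. The only organizational difference is that the paper packages the commutator and source bounds through explicit energy estimates (Lemmas \ref{energy.estimate.01}--\ref{energy.estimate.02} and Corollary \ref{energy.estimate.03}), which you invoke only implicitly.
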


\section{Proof of Main results}
In this section, we will give detailed proof to Theorem \ref{Theorem.Carleman} and Theorem \ref{IP.Theorem}. The methodology is similar as most papers on Carleman estimate such as Imanuvilov et al\cite{Imanuvilov001,IY001,IY002,IY003,IY004}.

\begin{proof}[Proof of Theorem \ref{Theorem.Carleman}]
Set \(\vec{w}_j=\vec{u}_j e^{s\varphi_j}\) and denote
\begin{gather*}
(L \vec{u} )\cdot e^{s\varphi_j}=
\begin{pmatrix}
p_{j,1} \p_t w_{j,1}+\p_x w_{j,2}-s(\p_t\varphi_j)p_{j,1} w_{j,1}-s(\p_x\varphi_j)w_{j,2}+p_{j,3} w_{j,1}\\
\p_x w_{j,1}+p_{j,2}\p_t w_{j,2}-s(\p_x\varphi_j)w_{j,1}-s(\p_t\varphi_j)p_{j,2} w_{j,2}+p_{j,4} w_{j,2}
\end{pmatrix}.
\end{gather*}
Take
\(
\wt{L}\vec{w}=(L \vec{u} )\cdot e^{s\varphi_j}-
\begin{pmatrix}
p_{j,3} w_{j,1}\\
p_{j,4} w_{j,2}
\end{pmatrix}
\). Then, by straightforward calculation we have
\begin{align*}
&\int_{Q_j}
(\wt{L}\vec{w}_j)^{T}
\begin{pmatrix}
p_{j,2}&0\\
0&p_{j,1}
\end{pmatrix}
\wt{L}\vec{w}_j
dxdt
\\
=
&
\int_{Q_j} p_{j,2}(p_{j,1}\p_t w_{j,1}+\p_x w_{j,2})^2+p_{j,1}(\p_x w_{j,1}+p_{j,2}\p_t w_{j,2})^2 dxdt
\\
&
-\int_{Q_j} 2s p_{j,2}(p_{j,1}\p_t w_{j,1}+\p_x w_{j,2})(p_{j,1}\p_t\varphi_j\cdot w_{j,1}+\p_x\varphi_j\cdot w_{j,2})dxdt
\\
&
-\int_{Q_j} 2s p_{j,1}(\p_x w_{j,1}+p_{j,2}\p_t w_{j,2})(\p_x\varphi_j\cdot w_{j,1}+p_{j,2}\p_t\varphi_j\cdot w_{j,2})dxdt
\\
&
+\int_{Q_j} s^2[p_{j,2}(p_{j,1}\p_t\varphi_j\cdot w_{j,1}+\p_x\varphi_j\cdot w_{j,2})^2+p_{j,1}(\p_x\varphi_j\cdot w_{j,1}+p_{j,2}\p_t\varphi_j\cdot w_{j,2})^2] dxdt
\\
\triangleq
&
\sum_{i=1}^4 I_{j,i}.
\end{align*}
Obviously, we have
\begin{align*}
I_{j,1}\geq& 0
\end{align*}
\begin{align*}
I_{j,2}+I_{j,3}=
&
-\int_{Q_j} 2s p_{j,2}(p_{j,1}\p_t w_{j,1}+\p_x w_{j,2})(p_{j,1}\p_t\varphi_j\cdot w_{j,1}+\p_x\varphi_j\cdot w_{j,2})dxdt
\no\\
&
-\int_{Q_j} 2s p_{j,1}(\p_x w_{j,1}+p_{j,2}\p_t w_{j,2})(\p_x\varphi_j\cdot w_{j,1}+p_{j,2}\p_t\varphi_j\cdot w_{j,2})dxdt\no\\
=&\int_Q s[p_{j,1}^2 p_{j,2} \p_t^{2}\varphi_j+\p_x(p_{j,1}\p_x\varphi_j)] w_1^2+s[p_{j,1} p_{j,2}^2 \p_t^{2}\varphi_j+\p_x(p_{j,2}\p_x\varphi_j)] w_2^2dxdt
\no\\
&+\int_Q s\{4p_{j,1} p_{j,2}\p_t\p_x\varphi_j+2[\p_x(p_{j,1} p_{j,2})]\p_t\varphi_j\} w_1 w_2 dxdt
\no
\\
&-\int_Q \p_x[s\p_x\varphi_j(p_{j,1}w_1^2+p_{j,2}w_2^2)+2sp_{j,1}p_{j,2}(\p_t\varphi_j)w_1 w_2] dxdt
\end{align*}
\begin{align*}
I_{j,4}=
&\int_{Q_j} s^2[p_{j,2}(p_{j,1}\p_t\varphi_j\cdot w_{j,1}+\p_x\varphi_j\cdot w_{j,2})^2+p_{j,1}(\p_x\varphi_j\cdot w_{j,1}+p_{j,2}\p_t\varphi_j\cdot w_{j,2})^2] dxdt
\no\\
=&\int_{Q_j} s^2 \Big[(p_{j,1} p_{j,2}|\p_t\varphi_j|^2+|\p_x\varphi_j|^2)(p_{j,1}w_{j,1}^2+p_{j,2}
w_{j,2}^2)
\no\\&
\hphantom{\int_{Q_j} s^2 \Big[}
+4p_{j,1} p_{j,2}(\p_t\varphi_j)(\p_x\varphi_j) w_{j,1} w_{j,2})\Big] dxdt.
\end{align*}
From Assumption \ref{assumption.1}, we have
\begin{align*}
&
16 p_{j,1}^2 p_{j,2}^2 |\p_t\varphi_j|^2|\p_x\varphi_j|^2
-4p_{j,1} p_{j,2}(p_{j,1} p_{j,2}|\p_t\varphi_j|^2+|\p_x\varphi_j|^2)^2
\\
=&-4p_{j,1} p_{j,2}(p_{j,1} p_{j,2}|\p_t\varphi_j|^2-|\p_x\varphi_j|^2)^2<0,
\end{align*}
thus,
\begin{gather}\label{I_j4}
 C_{\nI} s^2\int_{Q_j} \Big(w_{j,1}^2+w_{j,2}^2 \Big)dxdt\leq I_{j,4}.
\end{gather}
Moreover, by taking a sufficiently large \(s\), we can absorb all lower order terms of $s$ in $I_{j,2} + I_{j,3}$ and obtain
\begin{gather}
\label{ElectricalNetworks.Edge.Inequality}
C_{\nI} s^2\int_{Q_j} \Big(w_{j,1}^2+w_{j,2}^2 \Big) dxdt \leq \int_{Q_j}
(\wt{L}\vec{w}_j)^{T}
\begin{pmatrix}
p_{j,2}&0\\
0&p_{j,1}
\end{pmatrix}
\wt{L}\vec{w}_j
dxdt
+B_{j},
\end{gather}
where $B_j$ are related to boundary terms, i.e.,
\begin{gather*}
B_j=\int_{Q_j} \p_x[s\p_x\varphi_j(p_{j,1}w_{j,1}^2+p_{j,2}w_{j,2}^2)+2sp_{j,1}p_{j,2}(\p_t\varphi_j)w_{j,1} w_{j,2}] dxdt
\end{gather*}
Since we add all segments at last, we actually need to estimate \(B=\sum_{j=1}^{N} B_j\) as follow,
\begin{align*}
B=&\sum_{j=1}^{N} B_j=
\sum_{j=1}^{N} \int_{-T}^T [s\p_x\varphi_j(p_{j,1}u_{j,1}^2+p_{j,2}u_{j,2}^2)+2sp_{j,1}p_{j,2}(\p_t\varphi_j)u_{j,1} u_{j,2} ]e^{2s\varphi_j} dt\Big|_{x(I_j)}^{x(T_j)}
\no\\
\triangleq&
\sum_{k=0}^N D_k,
\end{align*}
where
\begin{align*}
D_k=&
\sum_{j\in S_{T,k}}\int_{-T}^T s\p_x\varphi_j(p_{j,1}u_{j,1}^2+p_{j,2}u_{j,2}^2)e^{2s\varphi_j} dt\Big|_{x=x(T_j)}
\no\\
&
-\sum_{j\in S_{I,k}}\int_{-T}^T s\p_x\varphi_j(p_{j,1}u_{j,1}^2+p_{j,2}u_{j,2}^2)e^{2s\varphi_j} dt\Big|_{x=x(I_j)}
\no\\
&
+\sum_{j\in S_{T,k}}\int_{-T}^T 2sp_{j,1}p_{j,2}(\p_t\varphi_j)u_{j,1} u_{j,2} e^{2s\varphi_j} dt\Big|_{x=x(T_j)}
\no\\
&
-\sum_{j\in S_{I,k}}\int_{-T}^T 2sp_{j,1}p_{j,2}(\p_t\varphi_j)u_{j,1} u_{j,2} e^{2s\varphi_j} dt\Big|_{x=x(I_j)}.
\end{align*}
If \(k=0\),
\begin{gather*}
D_0=-\sum_{j\in S_{I,0}}\int_{-T}^T s\p_x\varphi_j(p_{j,1}u_{j,1}^2+p_{j,2}u_{j,2}^2)e^{2s\varphi_j} dt\Big|_{x=x(I_j)}\leq 0.
\end{gather*}
If \(k\in\Pi_1\backslash\{0\}\), from \eqref{Electrical.Networks.Base}, we have
\begin{align*}
D_k=&
\sum_{j\in S_{T,k}}\int_{-T}^T s\p_x\varphi_j(p_{j,1}u_{j,1}^2+p_{j,2}u_{j,2}^2)e^{2s\varphi_j} dt\Big|_{x=x(T_j)}
\\
\leq
&C_{\nI} \sum_{j\in S_{T,k}}\int_{-T}^T s u_{j,1}^2 e^{2s\varphi_j} dt\Big|_{x=x(T_j)}.
\end{align*}
Next, we discuss the case of \(k\in\Pi_2\). From \eqref{Coefficient.Assumption} \eqref{Vertex.Inequality} \eqref{weightfunction.assumption}, we have
\begin{gather*}
\sum_{j\in S_{T,k}}\int_{-T}^T s\p_x\varphi_j p_{j,1}u_{j,1}^2 e^{2s\varphi_j} dt\Big|_{x=x(T_j)}
\leq
\sum_{j\in S_{I,k}}\int_{-T}^T s\p_x\varphi_j p_{j,1}u_{j,1}^2e^{2s\varphi_j} dt\Big|_{x=x(I_j)},\\
\sum_{j\in S_{T,k}}\int_{-T}^T s\p_x\varphi_j p_{j,2}u_{j,2}^2 e^{2s\varphi_j} dt\Big|_{x=x(T_j)}
=
\sum_{j\in S_{I,k}}\int_{-T}^T s\p_x\varphi_j p_{j,2}u_{j,2}^2e^{2s\varphi_j} dt\Big|_{x=x(I_j)},
\end{gather*}
\begin{align*}
&\sum_{j\in S_{T,k}}\int_{-T}^T 2 s p_{j,1}p_{j,2}(\p_t\varphi_j)u_{j,1} u_{j,2} e^{2s\varphi_j} dt\Big|_{x=x(T_j)}\\
=&
\sum_{j\in S_{I,k}}\int_{-T}^T 2 s p_{j,1}p_{j,2}(\p_t\varphi_j)u_{j,1} u_{j,2} e^{2s\varphi_j} dt\Big|_{x=x(I_j)}.
\end{align*}
which implies $D_k\leq 0,\ k\in\Pi_2$.

Combining all estimates of \(D_k\) gives the estimate of \(B\):
\begin{gather*}
B\leq \sum_{k\in\Pi_1\backslash\{0\}}D_k
\leq
C_{\nI}\wt{B},
\end{gather*}
where
\begin{gather*}
\wt{B}=\sum_{k\in\Pi_1\backslash\{0\}} \sum_{j\in S_{T,k}} \int_{-T}^T s u_{j,1}^2 e^{2s\varphi_j} dt\Big|_{x=x(T_j)}.
\end{gather*}
Finally according to \eqref{ElectricalNetworks.Edge.Inequality}, the estimate of \(B\) and the definition of \(\wt{L}\vec{w}\), we can complete the proof and obtain
\begin{gather*}
\sum_{j=1}^N s^2\int_{Q_j} |\vec{u}_j|^2 e^{2s\varphi_j}dxdt \leq
C_{\nI}\sum_{j=1}^N \int_{Q_j} |L \vec{u}_j|^2 e^{2s\varphi_j}dxdt+C_{\nI}\wt{B}.
\end{gather*}

Once we obtain the Carleman estimate of telegrapher's equations on the tree-shaped network, we can go further to prove the stability of inverse coefficient problem.
\end{proof}\par
{

Before proofing Theorem \ref{IP.Theorem}, the energy estimate of system \eqref{Electrical.Networks} and \eqref{Electrical.Networks.Base} will be derived.
\begin{lemma}\label{energy.estimate.01}
If \(\vec{u}_j\) is the solution of \eqref{Electrical.Networks}, then there exists a constant \(C\), which is dependent on \(\Omega\), \(t\), \(\vec{M}\) and independent of \(\{\vec{p}_j\}\), such that
\begin{gather*}
\sum_{j=1}^N \int_{\mathcal{I}_j} u_{j,1}^2(t)+u_{j,2}^2(t) dx \leq  C\left[\sum_{j=1}^N\int_{\mathcal{I}_j} z_{j,1}^2(x)+z_{j,2}^2(x) dx+ \sum_{j\in\Pi_1} \int_{0}^t \phi_k^2(\tau) d\tau\right].
\end{gather*}
\end{lemma}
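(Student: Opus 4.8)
The plan is to run the classical energy (multiplier) argument edge by edge and then glue the edges together; the one structural point is that the boundary terms created at the inner vertices cancel thanks to the transmission conditions \eqref{KirchhoffLaw}. For fixed $j$ I would take the $L^2(\mathcal I_j)$ inner product of $L(\vec p_j)\vu_j=0$ with $(u_{j,1},u_{j,2})^T$. Since the coefficients do not depend on $t$, $p_{j,1}u_{j,1}\p_t u_{j,1}+p_{j,2}u_{j,2}\p_t u_{j,2}=\tfrac12\p_t(p_{j,1}u_{j,1}^2+p_{j,2}u_{j,2}^2)$, and the off-diagonal terms add up to the exact $x$-derivative $\p_x(u_{j,1}u_{j,2})$, so
\begin{gather*}
\frac12\frac{d}{dt}\int_{\mathcal I_j}\big(p_{j,1}u_{j,1}^2+p_{j,2}u_{j,2}^2\big)dx+\int_{\mathcal I_j}\big(p_{j,3}u_{j,1}^2+p_{j,4}u_{j,2}^2\big)dx+\big[u_{j,1}u_{j,2}\big]_{x(I_j)}^{x(T_j)}=0.
\end{gather*}
As $p_{j,3},p_{j,4}\ge0$ the damping integral is discarded, and as $\underline M_i\le p_{j,i}\le M_i$ for $i=1,2$ the quantity $E_j(t):=\int_{\mathcal I_j}(p_{j,1}u_{j,1}^2+p_{j,2}u_{j,2}^2)dx$ is, uniformly in $\{\vec p_j\}$, comparable to $\int_{\mathcal I_j}(u_{j,1}^2+u_{j,2}^2)dx$.

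Next I would sum over $j=1,\dots,N$ and collect the boundary contributions vertex by vertex. At an inner vertex $V_k$ ($k\in\Pi_2$) the collected term is $\sum_{j\in S_{T,k}}(u_{j,1}u_{j,2})(x(V_k),t)-\sum_{j\in S_{I,k}}(u_{j,1}u_{j,2})(x(V_k),t)$; the second line of \eqref{KirchhoffLaw} lets me pull out the common voltage $u_2(V_k,t)$ and the first line (conservation of current) kills the remaining bracket, so inner vertices contribute nothing. At the root ($|S_{I,0}|=1$) and at each leaf ($k\in\Pi_1\backslash\{0\}$, $|S_{T,k}|=1$) the Dirichlet condition $u_{j,2}(x(V_k),t)=\phi_k(t)$ turns the surviving term into $\pm u_{j,1}(x(V_k),t)\phi_k(t)$, so with $E(t)=\sum_j E_j(t)$,
\begin{gather*}
\frac12\frac{d}{dt}E(t)\le\sum_{k\in\Pi_1}\big|u_{j,1}(x(V_k),t)\big|\,|\phi_k(t)|.
\end{gather*}

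The genuinely delicate step, and the one I expect to be the main obstacle, is that the right-hand side above involves the trace of the \emph{current} $u_{j,1}$ at a boundary vertex, which is not controlled by the $L^2$ energy $E$. I would remove it by homogenising the boundary data: pick cutoffs $\eta_j\in C^\infty(\overline{\mathcal I_j})$ equal to $1$ near the boundary endpoint of $E_j$ and vanishing near the other endpoint, and work with $\widetilde u_{j,2}=u_{j,2}-\eta_j\phi_k$, $\widetilde u_{j,1}=u_{j,1}$. Then $\widetilde{\vu}_j$ solves the same system with homogeneous Dirichlet data on $\Pi_1$ and with a source supported away from the inner vertices that depends linearly on $\phi_k$ and $\phi_k'$; running the energy identity for $\widetilde{\vu}_j$ now has no boundary term at $\Pi_1$, and the $\phi_k'$ part of the source is shifted onto $\phi_k$ by one integration by parts in $t$, leaving only $\int_0^t\phi_k(\tau)^2d\tau$ and a small multiple of $E$ that is absorbed on the left. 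Alternatively one may first prove a hidden-regularity estimate $\int_0^t u_{j,1}(x(V_k),\tau)^2d\tau\le C(\sup_{[0,t]}E+\text{data})$ via a Rellich-type multiplier $(x-x_j^*)\p_x$ on each edge and insert it directly.

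Either route yields a differential inequality $\tfrac{d}{dt}E(t)\le C\,E(t)+C\sum_{k\in\Pi_1}\phi_k(t)^2$, and I would finish with Gr\"{o}nwall's inequality on $[0,t]$, using $\vu_j(\cdot,0)=\vec z_j$ and the comparability of $E_j$ with $\int_{\mathcal I_j}(u_{j,1}^2+u_{j,2}^2)dx$; the resulting constant depends only on $t$, on the network $\Lambda$ (through the edge lengths), and on $\vec M,\underline M_1,\underline M_2$, not on the coefficients $\{\vec p_j\}$.
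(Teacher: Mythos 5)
Your strategy is exactly the one the paper sketches for Lemma \ref{energy.estimate.01}: multiply $L(\vec p_j)\vu_j=0$ by $\vu_j$ on each edge, observe that the off-diagonal terms form the exact derivative $\p_x(u_{j,1}u_{j,2})$, use \eqref{KirchhoffLaw} to cancel the flux at every inner vertex, and then reduce the inhomogeneous boundary data on $\Pi_1$ to the homogeneous case (the paper's Lemma \ref{energy.estimate.02}) by ``homogenization of boundary conditions.'' Up to and including the vertex-by-vertex cancellation your argument is complete and correct, and you are right to single out the boundary term $u_{j,1}(x(V_k),t)\,\phi_k(t)$ as the only genuinely delicate point, since the Dirichlet condition on the voltage is conservative rather than dissipative and the trace of the current is not controlled by the $L^2$ energy.

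The one step that does not close as written is your claim that, after lifting the data by $\eta_j\phi_k$, ``the $\phi_k'$ part of the source is shifted onto $\phi_k$ by one integration by parts in $t$.'' Integrating $\int_0^t\phi_k'(\tau)\int_{\mathcal I_j}p_{j,2}\eta_j\,\widetilde u_{j,2}\,dx\,d\tau$ by parts in $\tau$ produces $\int_0^t\phi_k\int_{\mathcal I_j}p_{j,2}\eta_j\,\p_t\widetilde u_{j,2}\,dx\,d\tau$, and $\p_t\widetilde u_{j,2}$ is not controlled by $E(t)$; substituting the equation for $p_{j,2}\p_t\widetilde u_{j,2}$ reintroduces either $\phi_k'$ or, after an integration by parts in $x$, the very boundary trace $\phi_k\,\widetilde u_{j,1}(x(V_k),\tau)$ you were trying to avoid. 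A straightforward homogenization therefore yields a bound in terms of $\int_0^t(\phi_k^2+|\phi_k'|^2)\,d\tau$, not $\int_0^t\phi_k^2\,d\tau$ alone; getting the pure $L^2$ bound stated in the lemma requires the hidden-regularity/transposition route you mention only as an alternative (a Rellich-type multiplier $(x-x_j^*)\p_x$ giving $\int_0^t u_{j,1}^2(x(V_k),\tau)\,d\tau\lesssim \sup_{[0,t]}E+\text{data}$), and that estimate needs its own proof. To be fair, the paper itself gives only a two-line sketch (``Lemma \ref{energy.estimate.01} is based on Lemma \ref{energy.estimate.02} and homogenization of boundary conditions'') and does not resolve this point either; but as a self-contained proof of the statement as written, your argument has a gap precisely here.
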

\begin{lemma}\label{energy.estimate.02}
If \(\vec{u}_j\) is the solution of \eqref{Electrical.Networks.Base}, then there exists a constant \(C\), which is dependent on \(\Omega\), \(t\), \(\vec{M}\) and independent of \(\{\vec{p}_j\}\), such that
\begin{gather*}
\sum_{j=1}^N \int_{\mathcal{I}_j} u_{j,1}^2(t)+u_{j,2}^2(t) dx \leq  C \sum_{j=1}^N (\sum_{j=1}^N \int_{\mathcal{I}_j} u_{j,1}^2(0)+u_{j,2}^2(0) dx+\int_{Q_k(t)} f_{j,1}^2+f_{j,2}^2 dxdt),
\end{gather*}
where \(Q_j(t)=\mathcal{I}_j\times(0,t)\).
\end{lemma}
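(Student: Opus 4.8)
The plan is to run a weighted energy method tailored to the network, in which the transmission (Kirchhoff) conditions do exactly the work needed to annihilate the cross-edge boundary contributions. On each edge I multiply the first scalar equation of \eqref{Electrical.Networks.Base} by $u_{j,1}$ and the second by $u_{j,2}$ and add; because the coefficients depend only on $x$, the time-derivative terms assemble into $\frac12\p_t(p_{j,1}u_{j,1}^2+p_{j,2}u_{j,2}^2)$, while the off-diagonal spatial terms combine into a perfect $x$-derivative. This yields the pointwise identity
\[
\tfrac12\p_t\bigl(p_{j,1}u_{j,1}^2+p_{j,2}u_{j,2}^2\bigr)+\p_x\bigl(u_{j,1}u_{j,2}\bigr)+p_{j,3}u_{j,1}^2+p_{j,4}u_{j,2}^2=u_{j,1}f_{j,1}+u_{j,2}f_{j,2}.
\]
The natural energy is therefore $E(t)=\sum_{j=1}^N\int_{\mathcal{I}_j}(p_{j,1}u_{j,1}^2+p_{j,2}u_{j,2}^2)\,dx$, which is comparable to $\sum_j\int_{\mathcal{I}_j}(u_{j,1}^2+u_{j,2}^2)\,dx$ with constants depending only on $\vec{M}$, $\underline{M}_1$, $\underline{M}_2$.

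Integrating the identity over $\mathcal{I}_j\times(0,t)$ and summing over $j$, the only genuinely network-specific term is the boundary sum $\int_0^t\sum_{j=1}^N[u_{j,1}u_{j,2}]_{x(I_j)}^{x(T_j)}\,d\tau$, and the heart of the proof is showing it vanishes. I would verify this vertex by vertex after regrouping all endpoint contributions at a fixed $V_k$. At a boundary vertex $k\in\Pi_1$ (including the root, using $S_{T,0}=\varnothing$) the boundary condition $u_{j,2}(x(V_k),t)=0$ kills every term attached to $V_k$. At an inner vertex $k\in\Pi_2$ the continuity condition in \eqref{KirchhoffLaw} lets me factor out the common value $u_2(V_k,t)$, after which the surviving bracket is exactly $\sum_{j\in S_{T,k}}u_{j,1}(x(T_j),t)-\sum_{j\in S_{I,k}}u_{j,1}(x(I_j),t)$, which is zero by the Kirchhoff current law in \eqref{KirchhoffLaw}. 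Since every edge endpoint is a vertex of one of these two types, the entire boundary sum cancels.

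With the boundary term eliminated, the rest is routine. I discard the nonpositive damping contribution $\int_0^t\sum_j\int_{\mathcal{I}_j}(p_{j,3}u_{j,1}^2+p_{j,4}u_{j,2}^2)\,dx\,d\tau$, legitimate since $p_{j,3},p_{j,4}\ge0$, and bound the source coupling by Young's inequality, $u_{j,1}f_{j,1}+u_{j,2}f_{j,2}\le\frac12(u_{j,1}^2+u_{j,2}^2)+\frac12(f_{j,1}^2+f_{j,2}^2)$. Invoking the two-sided comparison between $E$ and the $L^2$ norm then produces a differential (or integral) inequality of the form $E(t)\le E(0)+C_1\int_0^t E(\tau)\,d\tau+C_2\sum_j\int_{Q_j(t)}(f_{j,1}^2+f_{j,2}^2)\,dx\,d\tau$, with $C_1,C_2$ depending only on $\vec{M},\underline{M}_1,\underline{M}_2$. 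Grönwall's inequality on $[0,t]$, followed by one further use of the comparison to pass from $E$ back to $\sum_j\int_{\mathcal{I}_j}(u_{j,1}^2+u_{j,2}^2)\,dx$ on both sides, yields the stated estimate, the factor $e^{C_1 t}$ accounting for the dependence of the constant on $t$.

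I expect the single delicate point to be the boundary cancellation at the inner vertices, since this is where the tree structure and the transmission conditions \eqref{KirchhoffLaw} are essential; the remaining steps are standard. One should also check that the assumed regularity suffices to differentiate $E$ in time and to justify the integration by parts, so that the pointwise vertex traces $u_{j,1}u_{j,2}$ are meaningful; this is routine and, if desired, can be made rigorous by a density argument approximating $\vec{u}_j$ by smoother functions and passing to the limit.
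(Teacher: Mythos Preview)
Your proposal is correct and follows essentially the same approach as the paper's sketch: both compute $\sum_j\int_{Q_j(t)}\vec{f}_j^{\,T}\vec{u}_j\,dxdt$ (equivalently, multiply the two scalar equations by $u_{j,1}$ and $u_{j,2}$ and add), and both identify the key step as showing that the boundary term $\sum_j\int_{Q_j(t)}\p_x(u_{j,1}u_{j,2})\,dxdt$ vanishes via the Kirchhoff conditions \eqref{KirchhoffLaw} at inner vertices and the homogeneous Dirichlet data on $u_{j,2}$ at boundary vertices. Your write-up simply fills in the details (the explicit pointwise identity, the Young/Gr\"onwall closure) that the paper omits.
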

The proofs of these two lemma are similar with basic energy estimates, so we only give some key steps.
Lemma \ref{energy.estimate.01} is based on Lemma \ref{energy.estimate.02} and homogenization of boundary conditions.
To proof Lemma \ref{energy.estimate.02}, \(\sum_{j=1}^N \int_{Q_j(t)}\vec{f}_j^{~T} \vec{u}_j dxdt\) will be computed.
Transmission boundary conditions \eqref{KirchhoffLaw} on inner vertexes and boundary conditions in \eqref{Electrical.Networks.Base} are used to derive \(\sum_{j=1}^N\int_{Q_j(t)}\p_x(u_{j,1}u_{j,2}) dxdt=0\), which will attend in \(\sum_{j=1}^N \int_{Q_j(t)}\vec{f}_j^{~T} \vec{u}_j dxdt\).\par
Based on Lemma \ref{energy.estimate.01}, the follow corollary can be derived:
\begin{corollary}\label{energy.estimate.03}
There exists a constant \(\hat{M}>0\), which is dependent on \(\Omega\), \(T\), \(\vec{M}\), \(\underline{M}_{1}\), \(\underline{M}_{2}\), \(\{\vec{z}_{j}\}\), \(\{\phi_k\}_{k\in\Pi_1}\), such that
\begin{gather*}
\sum_{j=1}^N \sum_{k=0}^3 \int_{\mathcal{I}_j} |\p_t^k u_{j,1}(t)|^2+|\p_t^k u_{j,2}(t)|^2 dx \leq \hat{M}
\end{gather*}
for all \(t\in[0,T]\).
\end{corollary}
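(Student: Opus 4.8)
The plan is to deduce Corollary \ref{energy.estimate.03} from Lemma \ref{energy.estimate.01} by differentiating the governing system \eqref{Electrical.Networks} repeatedly in time, using crucially that the coefficients $\vec{p}_j$ depend only on $x$. Since the operator $L(\vec{p}_j)$ has $t$-independent coefficients, for each $0\le k\le 3$ the vector $\p_t^k\vec{u}_j$ solves a system of exactly the same structure: $L(\vec{p}_j)\,\p_t^k\vec{u}_j=0$ on $Q_j$, the transmission conditions \eqref{KirchhoffLaw} are inherited (they are linear with $t$-independent coefficients and hence commute with $\p_t$), the boundary relations become $\p_t^k u_{j,2}(x(V_k),t)=\p_t^k\phi_k(t)$ on $\Pi_1$, and the initial data become $\p_t^k\vec{u}_j(x,0)$. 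Thus Lemma \ref{energy.estimate.01} applies verbatim to each differentiated system, with $\vec{z}_j$ and $\phi_k$ replaced by their $k$-th order counterparts.

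First I would identify the initial data of the differentiated systems. Writing out the two scalar equations,
\begin{gather*}
p_{j,1}\p_t u_{j,1}=-(p_{j,3}u_{j,1}+\p_x u_{j,2}),\qquad p_{j,2}\p_t u_{j,2}=-(\p_x u_{j,1}+p_{j,4}u_{j,2}),
\end{gather*}
and using $p_{j,1}\ge\underline{M}_1>0$, $p_{j,2}\ge\underline{M}_2>0$, one can solve algebraically for $\p_t\vec{u}_j$. Setting $\vec{z}_j^{(k)}(x):=\p_t^k\vec{u}_j(x,0)$ with $\vec{z}_j^{(0)}=\vec{z}_j$, differentiating these relations in $t$ and evaluating at $t=0$ gives a recursion expressing $\vec{z}_j^{(k)}$ through $\vec{z}_j^{(k-1)}$ and its first spatial derivative. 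Unwinding the recursion, each $\vec{z}_j^{(k)}$ is a combination of $\vec{z}_j$ and its spatial derivatives up to order $k$, with coefficients assembled from $\vec{p}_j$ and $1/p_{j,1},1/p_{j,2}$. Because $\|p_{j,i}\|_{L^\infty(\mathcal{I}_j)}\le M_i$ and the leading two coefficients are bounded below, the norms $\|\vec{z}_j^{(k)}\|_{L^2(\mathcal{I}_j)}$ for $0\le k\le 3$ are controlled by $\|\vec{z}_j\|_{H^3(\mathcal{I}_j)}$ with a constant depending only on $\vec{M},\underline{M}_1,\underline{M}_2$; this is the step where $H^3$ regularity of the data enters.

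Next I would apply Lemma \ref{energy.estimate.01} to $\p_t^k\vec{u}_j$ for each $0\le k\le 3$, obtaining
\begin{gather*}
\sum_{j=1}^N\int_{\mathcal{I}_j}\big(|\p_t^k u_{j,1}(t)|^2+|\p_t^k u_{j,2}(t)|^2\big)\,dx\le C\Big[\sum_{j=1}^N\int_{\mathcal{I}_j}\big(|z_{j,1}^{(k)}|^2+|z_{j,2}^{(k)}|^2\big)\,dx+\sum_{\ell\in\Pi_1}\int_0^t|\p_t^k\phi_\ell|^2\,d\tau\Big]
\end{gather*}
for $t\in[0,T]$. Summing over $k=0,1,2,3$, bounding the first bracketed term via the previous paragraph and the second by $\sum_{k=0}^3\|\p_t^k\phi_\ell\|_{L^2(0,T)}^2$, and taking the supremum over $t\in[0,T]$ (all constants $C$ are bounded on the compact interval $[0,T]$), collapses the right-hand side into a single constant $\hat{M}$ depending on $\{\mathcal{I}_j\}$ (equivalently $\Omega$), $T$, $\vec{M}$, $\underline{M}_1$, $\underline{M}_2$, $\{\vec{z}_j\}$ and $\{\phi_k\}_{k\in\Pi_1}$, which is precisely the asserted uniform bound.

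The main obstacle is the justification that $\p_t^k\vec{u}_j$ is itself an admissible solution to which Lemma \ref{energy.estimate.01} may legitimately be applied, i.e. that $\vec{u}_j$ carries enough time regularity for the three differentiations and that the recursively computed $\vec{z}_j^{(k)}$ genuinely coincide with $\p_t^k\vec{u}_j(\cdot,0)$ in $L^2(\mathcal{I}_j)$. This requires the compatibility conditions at $t=0$ among the initial data $\vec{z}_j$, the boundary data $\phi_k$, and the transmission conditions \eqref{KirchhoffLaw} to hold to the appropriate order, so that the well-posedness theory of \cite{Sano001,Gugat001} yields a solution in $\bigcap_{k=0}^{3}C^{k}\big([0,T];H^{3-k}(\mathcal{I}_j)\big)$. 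Granting this regularity, the time-differentiation, the recursive identification of $\vec{z}_j^{(k)}$, and the summation above are all legitimate, and the estimate follows.
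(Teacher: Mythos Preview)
Your proposal is correct and follows exactly the route the paper indicates: the paper does not give a detailed proof of this corollary but simply states that it is ``based on Lemma~\ref{energy.estimate.01}'', and your argument---differentiating the system in $t$ (using that the coefficients are $t$-independent so that $\partial_t^k\vec{u}_j$ satisfies the same system with differentiated initial/boundary data and the same transmission conditions) and then applying Lemma~\ref{energy.estimate.01} to each $\partial_t^k\vec{u}_j$---is precisely the intended derivation. One small refinement: in your recursive computation of $\vec{z}_j^{(k)}$ the coefficients involve not only $\vec{p}_j$ and $1/p_{j,1},1/p_{j,2}$ but also their spatial derivatives, so the bound in terms of $\|\vec{z}_j\|_{H^3}$ alone with a constant depending only on $\vec{M},\underline{M}_1,\underline{M}_2$ is not quite right; however, since the corollary allows $\hat{M}$ to depend on the actual data $\{\vec{z}_j\}$ (and implicitly on the coefficients through the solution), this does not affect the conclusion.
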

}
\begin{proof}[Proof of Theorem \ref{IP.Theorem}]
By the definition of the function \(\varphi_j\), {there exists a \(d_1>0\) and a sufficient large \(T\), such that}
\begin{gather*}
\varphi_j(x,0){\geq} d_{1},\ \varphi_j(x,-T)=\varphi_j(x,T)<d_{1},\ x\in\mathcal{I}_j,\ j=1,\dots,N.
\end{gather*}
Thus, for a given \(\varepsilon>0\), we can choose a sufficiently small \(\delta>0\), such that
\begin{gather*}
\varphi_j(x,t)\geq d_{1}-\varepsilon,\ (x,t)\in\mathcal{I}_j\times[-\delta,\delta],\ j=1,2,\dots,N
\end{gather*}
and
\begin{gather*}
\varphi_j(x,t)\leq d_{1}-2\varepsilon,\ (x,t)\in\mathcal{I}_j\times[-T,-T+2\delta]\cup[T-2\delta,T],\ j=1,2,\dots,N.
\end{gather*}
To apply Carleman estimates, it is necessary to introduce a cut-off function \(\chi\) satisfying \(0\leq \chi\leq 1,\ \chi\in C^{\infty}(\mathbb{R})\) and
\begin{gather*}
\chi(t)=
\begin{cases}
0,\ t\in[-T,-T+\delta]\cup[T-\delta,T],\\
1,\ t\in[-T+2\delta,T-2\delta].
\end{cases}
\end{gather*}
{
Now, let us consider the difference between \eqref{ip.sys.01} and \eqref{ip.sys.02}
Denote \(\vec{w}_{m,j}=(\vec{u}_{m,j}-\vec{v}_{m,j})\), thus
\begin{gather}\label{system.delta}
\begin{cases}
L(\vec{p}_{j})\vec{w}_{m,j}=f_{m,j},\ (x,t)\in Q_j,\ j=1,\dots,N,\\
\vec{w}_{m,j}(x,0)=0,\ x\in(x(I_j),x(T_j)),\ j=1,\dots,N,\\
\vec{w}_{m,j}(x(V_k),t)=0,\ j\in S_{I,k}\cup S_{T,k},\ k\in \Pi_1,
\end{cases}
\end{gather}
where
\begin{gather*}
f_{m,j}=
\begin{pmatrix}
\rho_{j,1}\p_t v_{m,j,1}+\rho_{j,3} v_{m,j,1}\\
\rho_{j,2}\p_t v_{m,j,2}+\rho_{j,4} v_{m,j,2}
\end{pmatrix}.
\end{gather*}
}
Obviously, at \(t=0\) we have
\begin{gather*}
\begin{cases}
p_{j,1}\p_t w_{m,j,1}=-\rho_{j,1}\dfrac{\p_x z_{1,j,2}+q_{j,3}z_{1,j,1}}{q_{j,1}}+\rho_{j,3} z_{m,j,1},\\
p_{j,2}\p_t w_{m,j,2}=-\rho_{j,2}\dfrac{\p_x z_{1,j,1}+q_{j,4}z_{1,j,2}}{q_{j,2}}+\rho_{j,4} z_{m,j,2}.
\end{cases}
\end{gather*}
According to Assumption \ref{Inverse.Assumption}, we have
\begin{gather}\label{rho.e1}
\sum_{n=1}^4 \int_{\mathcal{I}_j}|\rho_{j,n}|^2 dx
\leq C_{\nI}\int_{\mathcal{I}_j} \sum_{m=1}^2(|\p_t w_{m,j,1}|^2+|\p_t w_{m,j,2}|^2)dx\Big|_{t=0}.
\end{gather}
Here,
\begin{align*}
&
 \int_{\mathcal{I}_j} (|\p_t w_{m,j,1}|^2+|\p_t w_{m,j,2}|^2)e^{2s\varphi_j} dx\Big|_{t=0}
 \\
 =&\int_{-T}^0 \frac{\p}{\p t}\int_{\mathcal{I}_j} \chi^2 (|\p_t w_{m,j,1}|^2+|\p_t w_{m,j,2}|^2)e^{2s\varphi_j} dx dt
 \\
 \leq &C_{\nI}\int_{Q_j} s\chi^2(|\p_t w_{m,j,1}|^2+|\p_t w_{m,j,2}|^2+|\p_t^2 w_{m,j,1}|^2+|\p_t^2 w_{m,j,2}|^2)e^{2s\varphi_j} dxdt
 \\
 &+\int_{-T}^0\int_{\mathcal{I}_j} (|\p_t w_{m,j,1}|^2+|\p_t w_{m,j,2}|^2)e^{2s\varphi_j}(\p_t(\chi^2)) dxdt
 \\
 \leq &C_{\nI}\int_{Q_j} s\chi^2(|\p_t w_{m,j,1}|^2+|\p_t w_{m,j,2}|^2+|\p_t^2 w_{m,j,1}|^2+|\p_t^2 w_{m,j,2}|^2)e^{2s\varphi_j} dxdt
 \\
 &+C_{\tI} e^{2s(d_1-2\varepsilon)}{\int_{-T+\delta}^{-T+2\delta}\int_{\mathcal{I}_j}(|\p_t w_{m,j,1}|^2+|\p_t w_{m,j,2}|^2)dxdt}
 .
\end{align*}
In order to estimate those terms of derivatives with respect to $t$ above, we have to take $t$-derivative to the original telegrapher's equations and reuse the Carleman estimate again. Denote by \(y_{l,m,j}=\chi\p_t^l \vec{w}_{m,j},\ l=1,2\). Then we have the following systems
\begin{gather*}
\begin{cases}
L(\vec{p}_{j})\vec{y}_{l,m,j}=\chi \p_t^l
\vec{f}_{m,j}-\p_t\chi \p_t^l \vec{w}_{m,j},\ (x,t)\in Q_j,\ j=1,\dots,N,\\
\vec{y}_{l,m,j}(x,t)=0,\ (x,t)\in(x(I_j),x(T_j))\times\{0,\pm T\},\ j=1,\dots,N,\\
\vec{y}_{l,m,j}(x(V_k),t)=0,\ j\in S_{I,k}\cup S_{T,k},\ k\in \Pi_1.
\end{cases}
\end{gather*}
Applying Theorem \ref{Theorem.Carleman}, we have
\begin{align*}
&\sum_{j=1}^N s^2\int_{Q_j} (|\chi\p_t^l w_{m,j,1}|^2+|\chi\p_t^l w_{m,j,2}|^2) e^{2s\varphi_j}dxdt
\\
\leq&
C_{\nI}\sum_{j=1}^N( \int_{Q_j} |\chi\p_t^l \vec{f}_{m,j}|^2 e^{2s\varphi_j}+ |\p_t\chi \p_t^l \vec{w}_{m,j}|^2 e^{2s\varphi_j} dxdt)+C_{\nI}s\wt{B}_{l,m}\\
\leq&
C_{\nI}\sum_{j=1}^N\int_{Q_j}|\p_t^l \vec{f}_{m,j}|^2 e^{2s\varphi_j}dxdt\\
&+C_{\nI}\sum_{j=1}^N (\int_{-T+\delta}^{-T+2\delta}+\int_{T-2\delta}^{T-\delta})\int_{\mathcal{I}_j}|\p_t^l \vec{w}_{m,j}|^2dxdt e^{2s(d_{1}-2\veps)}dxdt+C_{\nI}s\wt{B}_{l,m}\\
\triangleq& C_{\nI}(P_{1,l}+P_{2,l}+s\wt{B}_{l,m})
\end{align*}
where
\begin{gather*}
\wt{B}_{l,m}=\sum_{k\in\Pi_1\backslash\{0\}}\sum_{j\in S_{T,k}} \int_{-T}^T  \chi^2 |\p_t^l w_{m,j,1}|^2 e^{2s\varphi_j} dt\Big|_{x=x(T_j)}.
\end{gather*}
Corollary \ref{energy.estimate.03} gives the estimate of \(P_{1,l}\):
\begin{gather*}
P_{1,l}=\sum_{j=1}^N\int_{Q_j}|\p_t^l\vec{f}_{m,j}|^2 e^{2s\varphi_j}dxdt
\leq
C_{\nI}\hat{M}T\sum_{j=1}^N\sum_{n=1}^4\int_{Q_j}\rho_{j,n}^2 e^{2s\varphi_j}dxdt.
\end{gather*}
Applying Lemma \ref{energy.estimate.02} on the system which \(\p_t^l \vec{w}_{m,j},\ l=1,2\) satisfy, estimates of \(P_{2,l}\) can be derived:
\begin{align*}
&P_{2,l}=\sum_{j=1}^N(\int_{-T+\delta}^{-T+2\delta}+\int_{T-2\delta}^{T-\delta})\int_{\mathcal{I}_j}(|\p_t^l w_{m,j,1}|^2+|\p_t^l w_{m,j,2}|^2)dxdt\\
\leq & C_{\nI} \sum_{j=1}^N\sum_{n=1}^4\int_{\mathcal{I}_j}\rho_{j,n}^2 dx.
\end{align*}
Thus, do the summation of  \eqref{rho.e1} with respect to $j$ gives
\begin{align*}
&\sum_{j=1}^{N}\sum_{n=1}^4 s\int_{\mathcal{I}_j} |\rho_{j,n}|^2e^{2s\varphi_j}dx\Big|_{t=0}
\\
\leq& C_{\nI}\sum_{j=1}^N\sum_{n=1}^4\int_{Q_j}\rho_{j,n}^2 e^{2s\varphi_j}dxdt
+C_{\tI}s\sum_{m,l=1}^2 \wt{B}_{l,m}\\
&+C_{\tI}se^{2s(d_1-2\varepsilon)}\sum_{j=1}^N\int_{-T+\delta}^{-T+2\delta}\int_{\mathcal{I}_j}(|\p_t w_{m,j,1}|^2+|\p_t w_{m,j,2}|^2)dxdt.
\end{align*}
Similar with \(P_{2,j}\), the estimate of the last term of above inequality can be derived and at the beginning of this proof, we have known that \(\varphi_j(x,0)\geq d_1\). Hence, taking \(s>0\) large,
\begin{gather*}
\sum_{j=1}^{N}\sum_{n=1}^4 \int_{\mathcal{I}_j} |\rho_{j,n}|^2 e^{2s\varphi_j(\cdot,0)}dx\leq
C_{\nI} \sum_{m,l=1}^2 \sum_{k\in\Pi_1\backslash\{0\}}\sum_{j\in S_{T,k}} \int_{-T}^T  |\p_t^l w_{m,j,1}|^2 e^{2s\varphi_j} dt\Big|_{x=x(T_j)}.
\end{gather*}
Thus we complete the proof of Theorem \ref{IP.Theorem}.
\end{proof}

\bibliographystyle{unsrt}
\bibliography{ref}
\nocite{*}

\end{CJK*}

\end{document}